\newtheorem{theorem}{Theorem}
\newtheorem{remark}{Remark}
\newtheorem{lemma}{Lemma}
\newtheorem{assumption}{Assumption}
\leaders\hbox{\normalfont$\m@th\mkern \@dotsep mu\hbox{.}\mkern \@dotsep mu$}\hfill}
\title{\Large\bf Posterior Asymptotic Normality for an Individual Coordinate in High-dimensional Linear Regression}
\author{\vspace{-1ex} \textsc{\small Dana Yang}\thanks{xiaoqian.yang@yale.edu}}
\affil{\vspace{-1ex} \small \it Department of Statistics and Data Science, Yale University}
\date{\vspace{-9ex}}
\begin{document}

\maketitle

\begin{abstract}
We consider the sparse high-dimensional linear regression model $Y=Xb+\epsilon$ where $b$ is a sparse vector. For the Bayesian approach to this problem, many authors have considered the behavior of the posterior distribution when, in truth, $Y=X\beta+\epsilon$ for some given $\beta$. There have been numerous results about the rate at which the posterior distribution concentrates around $\beta$, but few results about the shape of that posterior distribution. We propose a prior distribution for $b$ such that the marginal posterior distribution of an individual coordinate $b_i$ is asymptotically normal centered around an asymptotically efficient estimator, under the truth. Such a result gives Bayesian credible intervals that match with the confidence intervals obtained from an asymptotically efficient estimator for $b_i$. We also discuss ways of obtaining such asymptotically efficient estimators on individual coordinates. We compare the two-step procedure proposed by Zhang and Zhang~\cite{Zhang2014confidence} and a one-step modified penalization method. 
\end{abstract}

\section{Introduction.}

Consider the regression model
\begin{equation}\label{model}
Y=Xb+\epsilon,\;\;\;\epsilon\sim\mathcal{N}(0,I_n).
\end{equation}
The design matrix $X$ is of dimension $n\times p$. We are particularly interested in the case where $p>n$, for which $b$ itself is not identifiable. In such a setting identifiability can be attained by adding a sparsity constraint on $|b|_0$, the number of nonzero $b_i$'s. That is, the model consists of a family of probability measures $\{\mathbb{P}_b:b\in\mathbb{R}^p,|b|_0\leq s^*\}$, and the observation $Y$ is distributed $\mathcal{N}(Xb,I_n)$ under $\mathbb{P}_b$.

\medskip

We are interested in the Bayesian inference on the vector $b$, when $Y$ is actually distributed $N(X\beta, I_n)$ for some truth $\beta$. If $p$ were fixed and $X$ were full rank, classical theorems (the Bernstein-von Mises theorem, as in~\cite[page~141]{van2000asymptotic}) gives conditions under which the posterior distribution of $b$ is asymptotically normal centered at the least squares estimator, with variance $(X^T X)^{-1}$ under $\mathbb{P}_\beta$.

\medskip

The classical theorem fails when $p>n$. Although sparse priors have been proposed that give good posterior contraction rates~\cite{castillo2015bayesian}~\cite{gao2015general}, posterior normality of $b$ is only obtained under strong signal-to-noise ratio (SNR) conditions, such as the SNR conditions of Castillo el al.~\cite[Corollary 2]{castillo2015bayesian}, which forced the posterior to eventually have the same support as $\beta$. Effectively, their conditions reduce the problem to the classical, fixed dimensional case. However that is not the most interesting scenario. Without the SNR condition, Castillo et al.~\cite[Theorem 6]{castillo2015bayesian} pointed out that under the sparse prior, the posterior distribution of $b$ behaves like a mixture of Gaussians.

\medskip

However, there is hope to obtain posterior normality results without the SNR condition if one considers the situation where only one component of $b$ is of interest, say $b_1$, without loss of generality. All the other components are viewed as nuisance parameters. As shown by Zhang and Zhang~\cite{Zhang2014confidence} in a non-Bayesian setting, it is possible to construct estimators that are efficient in the classical sense that
\begin{equation}\label{beta1.hat}
\hat{\beta}_1=\beta_1+\frac{X_1^T\epsilon}{|X_1|^2}+o_p\left(\frac{1}{\sqrt{n}}\right).
\end{equation}
We will use $o_p(\cdot)$ as a short hand for a stochastically small order term under $\mathbb{P}_\beta$ throughout this document. Here $X_i$ denotes the $i$'th column of $X$, and the $o_p(\cdot)$ indicates that a term is of stochastically smaller order under $\mathbb{P}_\beta$. Later we also write$X_{-i}$ to denote the $n\times (p-1)$ matrix formed by all columns of $X$ except for $X_i$. The $|\cdot|$ norm on a vector refers to the Euclidean norm. 

\medskip

Approximation~\eqref{beta1.hat} is useful when $|X_1|$ is of order $\sqrt{n}$, in which the expansion~\eqref{beta1.hat} implies weak convergence~\cite[page~171]{pollard2002user}:
\begin{equation*}
|X_1|(\hat{\beta}_1^{(ZZ)}-\beta_1)\leadsto \mathcal{N}(0,1)
\end{equation*}
under $\mathbb{P}_\beta$ (Such behavior for $|X_1|$ is obtained with high probability when $X$ is generated {\it i.i.d.} from the standard normal distribution). More precisely, Zhang and Zhang~\cite{Zhang2014confidence} proposed a two-step estimator that satisfies~\eqref{beta1.hat} under some regularity assumptions on $X$ and no SNR conditions. They required the following behavior for $X$.

\medskip

\begin{assumption}\label{assump.id}
Let $\gamma_i=X_1^TX_i/|X_1|^2, \lambda_n=\sqrt{\frac{\log p}{n}}$. There exists a constant $c_1>0$ for which
\begin{equation*}
\max_{2\leq i\leq p}\gamma_i\leq c_1\lambda_n.
\end{equation*}
\end{assumption}

\begin{assumption}\label{assump.rec} (REC($3s^*,c_2$))
There exists constants $c_2,c'>0$ for which
\begin{equation}\label{def.rec}
\kappa(3s^*,c_2)=\min_{\substack{J\subset[p],\\ |J|\leq 3s^*}}\inf_{\substack{b\neq 0, \\ \left|b_{J^C}\right|_1\leq c_2|b_J|_1}}\frac{|Xb|}{\sqrt{n}|b_J|}>c'>0.
\end{equation}
\end{assumption}

\begin{assumption}\label{assump.dim}
The model dimension satisfies
\begin{equation*}
s^*\log p=o(\sqrt{n}).
\end{equation*}
\end{assumption}

\begin{remark}
Assumption~\ref{assump.rec} is known as the restricted eigenvalue condition~\cite[page~1710]{bickel2009simultaneous} required for penalized regression estimators such as the LASSO estimator~\cite[page~1]{tibshirani1996regression} and the Dantzig selector~\cite[page~1]{candes2007dantzig} to enjoy optimal $l^1$ and $l^2$ convergence rates. 
\end{remark}

\begin{theorem}\label{ZZ.th}~\cite[Section 2.1,3.1]{Zhang2014confidence}
Under assumptions~\ref{assump.id},~\ref{assump.rec} and~\ref{assump.dim}, the estimator $\hat{\beta}_1^{(ZZ)}$ has expansion~\eqref{beta1.hat}. 
\end{theorem}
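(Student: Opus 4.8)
The plan is to work directly with the algebraic form of the Zhang--Zhang two-step (low-dimensional projection) estimator and split it into its stochastic linear term and a remainder. Recall the construction: one first forms an initial estimator $\hat{\beta}^{\mathrm{init}}$ of the whole vector $\beta$ by the LASSO with penalty level a fixed multiple of $\lambda_n$, and then forms a projection direction $z=X_1-X_{-1}\hat{\gamma}$, where $\hat{\gamma}$ is the coefficient vector of an $\ell^1$-penalized regression of $X_1$ on $X_{-1}$, again at penalty level of order $\lambda_n$; the estimator is
\begin{equation*}
\hat{\beta}_1^{(ZZ)}=\hat{\beta}_1^{\mathrm{init}}+\frac{z^T\bigl(Y-X\hat{\beta}^{\mathrm{init}}\bigr)}{z^TX_1}.
\end{equation*}
A key simplification is that Assumption~\ref{assump.id} is exactly what makes the KKT condition for the projection regression hold at $\hat{\gamma}=0$: the $i$-th gradient component there is $-X_i^TX_1/n=-\gamma_i|X_1|^2/n$, which has absolute value at most $c_1\lambda_n|X_1|^2/n$, below the penalty level once $|X_1|^2=O(n)$. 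Hence under Assumption~\ref{assump.id} we may take $z=X_1$, which I will do; in Zhang and Zhang's more general setting the relaxed projection is genuinely needed, but here it agrees with $X_1$ to the accuracy required.

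Substituting $Y=X\beta+\epsilon$ and writing $X\hat{\beta}^{\mathrm{init}}=X_1\hat{\beta}_1^{\mathrm{init}}+X_{-1}\hat{\beta}_{-1}^{\mathrm{init}}$ (with $\beta_{-1},\hat{\beta}_{-1}^{\mathrm{init}}$ the subvectors with the first coordinate removed) gives, after cancellation of $\hat{\beta}_1^{\mathrm{init}}$,
\begin{equation*}
\hat{\beta}_1^{(ZZ)}-\beta_1=\frac{X_1^T\epsilon}{|X_1|^2}+\frac{X_1^TX_{-1}\bigl(\beta_{-1}-\hat{\beta}_{-1}^{\mathrm{init}}\bigr)}{|X_1|^2}.
\end{equation*}
The first term is precisely the linear term of~\eqref{beta1.hat}, so the whole theorem reduces to showing that the second term is $o_p(1/\sqrt n)$. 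In Zhang and Zhang's construction with a nontrivial $z$ one would at this point also have to bound $z^TX_1$ from below and control the discrepancy between $z^T\epsilon/z^TX_1$ and $X_1^T\epsilon/|X_1|^2$; Assumption~\ref{assump.id} makes both steps unnecessary.

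For the remainder, Hölder's inequality gives
\begin{equation*}
\left|\frac{X_1^TX_{-1}\bigl(\beta_{-1}-\hat{\beta}_{-1}^{\mathrm{init}}\bigr)}{|X_1|^2}\right|\le\frac{\|X_{-1}^TX_1\|_\infty}{|X_1|^2}\,\bigl|\beta_{-1}-\hat{\beta}_{-1}^{\mathrm{init}}\bigr|_1.
\end{equation*}
The first factor is $\max_{2\le i\le p}|X_i^TX_1|/|X_1|^2=\max_{2\le i\le p}|\gamma_i|\le c_1\lambda_n$ directly by Assumption~\ref{assump.id}, so the $|X_1|^2$ normalization cancels exactly and no lower bound on $|X_1|$ is needed for the expansion. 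The second factor is handled by the standard $\ell^1$ oracle inequality for the LASSO: on the event $\{\|X^T\epsilon\|_\infty\le C\sqrt{n\log p}\}$, which has probability tending to one for Gaussian $\epsilon$ with columns of order $\sqrt n$, the restricted eigenvalue condition of Assumption~\ref{assump.rec} (REC at sparsity $3s^*$ is more than enough) yields $|\hat{\beta}^{\mathrm{init}}-\beta|_1=O_p(s^*\lambda_n)$. Multiplying, the remainder is $O_p(s^*\lambda_n^2)=O_p(s^*\log p/n)$, and Assumption~\ref{assump.dim}, $s^*\log p=o(\sqrt n)$, is exactly the condition turning this into $o_p(1/\sqrt n)$; this completes the expansion.

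The part requiring the most care is the $\ell^1$-consistency input $|\hat{\beta}^{\mathrm{init}}-\beta|_1=O_p(s^*\lambda_n)$ and its interaction with Assumption~\ref{assump.id}: one needs the penalty levels of both LASSO problems tuned to the same order $\lambda_n$, the high-probability bound $\|X^T\epsilon\|_\infty=O_p(\sqrt{n\log p})$, and the restricted eigenvalue lower bound of Assumption~\ref{assump.rec}, and then the two ingredients $\max_i|\gamma_i|=O(\lambda_n)$ and $|\hat{\beta}^{\mathrm{init}}-\beta|_1=O_p(s^*\lambda_n)$ combine to give a remainder of order $s^*\log p/n$, which Assumption~\ref{assump.dim} is precisely what is needed to render negligible next to $1/\sqrt n$. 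A secondary subtlety worth flagging is that the target linear term in~\eqref{beta1.hat} is anchored at $X_1^T\epsilon/|X_1|^2$ rather than at the generic projection ratio $z^T\epsilon/z^TX_1$; these coincide here only because Assumption~\ref{assump.id} lets us take $z=X_1$.
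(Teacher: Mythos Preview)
Your argument is correct and matches the paper's treatment. The paper does not give a self-contained proof of this theorem (it is attributed to Zhang and Zhang), but in Section~\ref{sec:freq.debias} it defines $\hat{\beta}_1^{(ZZ)}$ directly as the least-squares minimizer $\arg\min_{b_1}|Y-X_{-1}\tilde{\beta}_{-1}-b_1X_1|^2$, and the essential computation you carry out---the decomposition $\hat{\beta}_1-\beta_1=X_1^T\epsilon/|X_1|^2+\sum_{i\ge2}\gamma_i(\beta_i-\tilde{\beta}_i)$ followed by the bound $\max_i|\gamma_i|\cdot|\tilde{\beta}-\beta|_1\le c_1\lambda_n\cdot O_p(s^*\lambda_n)=o_p(1/\sqrt n)$---is exactly the argument the paper spells out in the proof of Theorem~\ref{th.one.step} (see display~\eqref{beta1hat.rep} and the paragraph after it).

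The one genuine addition in your write-up is the observation that Assumption~\ref{assump.id} is precisely the KKT condition for $\hat\gamma=0$ in the projection LASSO, so that Zhang and Zhang's general score vector $z=X_1-X_{-1}\hat\gamma$ collapses to $X_1$. The paper simply \emph{defines} its version of $\hat{\beta}_1^{(ZZ)}$ with $z=X_1$ from the start, without commenting on why this suffices; your remark supplies the missing link between the general Zhang--Zhang construction and the simplified form the paper uses. This is a useful clarification but not a different proof strategy.
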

The exact form of the estimator $\hat{\beta}_1^{(ZZ)}$ will be given in section~\ref{sec:freq.debias}. 

\medskip

The goal of this paper is to give a Bayesian analogue for Theorem~\ref{ZZ.th}, in the form of a prior distribution on $b$ such that as $n,p\rightarrow\infty$, the posterior distribution of $b_1$ starts to resemble a normal distribution to centered around an estimator in the form of~\eqref{beta1.hat}. Note that the sparse prior introduced by Castillo et al.~\cite{castillo2015bayesian} does not meet our goal since the marginal posterior distribution of $b_1$ under the sparse prior converges weakly to a mixture of normal distributions without consistent model selection. 

\begin{theorem}\label{main.th}
Under assumptions~\ref{assump.id},~\ref{assump.rec} and~\ref{assump.dim} and the constraint $|X_1|=O(\sqrt{n})$, there exists a prior on $b$ for which the posterior distribution of $|X_1|(b_1-\hat{\beta}_1)$ satisfies
\begin{equation}\label{goal}
\left\lVert\mathcal{L}\left(|X_1|(b_1-\hat{\beta}_1)|Y\right)-\mathcal{N}(0,1)\right\rVert_{BL}\rightarrow 0 \text{ in }\mathbb{P}_\beta,
\end{equation}
where $\hat{\beta}_1$ is an estimator of $\beta_1$ with expansion~\eqref{beta1.hat}.
\end{theorem}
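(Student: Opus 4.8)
The plan is to use a product prior $\Pi=\pi_1\otimes\Pi_{-1}$ on $b=(b_1,b_{-1})$ with $b_{-1}=(b_2,\dots,b_p)$: $\pi_1$ is a fixed continuous slab on the coordinate of interest (a centred Gaussian with a sufficiently large variance, or even the improper flat prior), while $\Pi_{-1}$ is the sparse prior of Castillo et al.~\cite{castillo2015bayesian} on the nuisance block. The point is the conditional structure of the posterior. Since $-\log p(Y\mid b)$ is quadratic in $b_1$ with Hessian $|X_1|^2$, and $\pi_1$ is essentially constant on scale $|X_1|^{-1}=O(n^{-1/2})$, the conditional posterior of $b_1$ given $(b_{-1},Y)$ is (exactly, when $\pi_1$ is flat) the Gaussian
\[
\mathcal{N}\!\left(\hat m(b_{-1}),\,\frac{1}{|X_1|^2}\right),\qquad \hat m(b_{-1}):=\frac{X_1^T(Y-X_{-1}b_{-1})}{|X_1|^2},
\]
and for a proper large-variance $\pi_1$ it differs from this only by terms that are $o(1)$ after multiplying the location by $|X_1|$ and that inflate the variance to $|X_1|^{-2}(1+o(1))$. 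Hence the marginal posterior of $b_1$ is the mixture $\int\mathcal{N}(\hat m(b_{-1}),|X_1|^{-2})\,\Pi(db_{-1}\mid Y)$, and it suffices to show that after the $|X_1|$-rescaling the mixing distribution of the location collapses to a point mass.

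To see what that point is, note that under $\mathbb{P}_\beta$ one has $X_1^TY/|X_1|^2=\beta_1+X_1^T\epsilon/|X_1|^2+\sum_{i\ge2}\gamma_i\beta_i$, so $\hat m(b_{-1})=\bigl(\beta_1+X_1^T\epsilon/|X_1|^2\bigr)+\sum_{i\ge2}\gamma_i(\beta_i-b_i)$. Taking $\hat\beta_1$ to be \emph{any} estimator with the expansion \eqref{beta1.hat} (e.g.\ $\hat\beta_1^{(ZZ)}$ from Theorem~\ref{ZZ.th}),
\[
|X_1|\bigl(\hat m(b_{-1})-\hat\beta_1\bigr)=|X_1|\sum_{i\ge2}\gamma_i(\beta_i-b_i)+|X_1|\Bigl(\beta_1+\tfrac{X_1^T\epsilon}{|X_1|^2}-\hat\beta_1\Bigr).
\]
The second term is $|X_1|\cdot o_p(n^{-1/2})=o_p(1)$ by \eqref{beta1.hat} and $|X_1|=O(\sqrt n)$; the first is bounded in absolute value by $|X_1|\,\max_{i\ge2}|\gamma_i|\,|b_{-1}-\beta_{-1}|_1\le O(\sqrt{\log p}\,)\,|b_{-1}-\beta_{-1}|_1$ using Assumption~\ref{assump.id}. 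So everything reduces to an $\ell_1$-contraction statement for the nuisance posterior.

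For that, integrate $b_1$ out of the likelihood against $\pi_1$: the induced model for $b_{-1}$ is a Gaussian sparse regression of $P_1Y$ on $P_1X_{-1}$, where $P_1=I_n-X_1X_1^T/|X_1|^2$ (for a proper $\pi_1$ the precision is a rank-one perturbation of $P_1$ that is negligible once the prior variance is large enough). One checks that Assumptions~\ref{assump.id}--\ref{assump.rec} pass to $P_1X_{-1}$: for $v$ in the relevant cone, $|P_1X_{-1}v|^2=|X_{-1}v|^2-|X_1|^2\bigl(\sum_{i\ge2}\gamma_iv_i\bigr)^2\ge\kappa^2 n|v_J|^2-O(n\lambda_n^2)|v|_1^2\ge\kappa^2 n|v_J|^2-O(s^*\log p)|v_J|^2$ after bounding $|v|_1^2$ by $O(s^*)|v_J|^2$ on the cone, and $s^*\log p=o(\sqrt n)=o(n)$ by Assumption~\ref{assump.dim} makes the correction negligible. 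Since $\beta_{-1}$ has at most $s^*$ nonzero entries and $P_1Y$ has mean $P_1X_{-1}\beta_{-1}$ exactly, the posterior-contraction theorem of \cite{castillo2015bayesian} gives a constant $M$ with $\Pi(|b_{-1}-\beta_{-1}|_1\le Ms^*\lambda_n\mid Y)\to1$ in $\mathbb{P}_\beta$; on that event the first term above is $O(\sqrt{\log p}\cdot s^*\lambda_n)=O(s^*\log p/\sqrt n)=o(1)$, again by Assumption~\ref{assump.dim}. I expect this step -- making the sparse-prior concentration machinery survive both the marginalization of the non-sparse coordinate and the resulting (nearly) degenerate projected design, including re-deriving the restricted-eigenvalue/compatibility constants and re-checking the prior-mass conditions for $\Pi_{-1}$ -- to be the main obstacle; everything else is short.

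Finally, assemble the bound. Write $a(b_{-1})=|X_1|(\hat m(b_{-1})-\hat\beta_1)$ and $v(b_{-1})=|X_1|^2\,\mathrm{Var}(b_1\mid b_{-1},Y)$; the previous steps show that, with $\mathbb{P}_\beta$-probability $\to1$, the posterior of $b_{-1}$ puts mass $1-o_p(1)$ on a set where $\sup|a(b_{-1})|=o(1)$ and $\sup|v(b_{-1})-1|=o(1)$. Using the elementary estimate $\|\mathcal{N}(a,v)-\mathcal{N}(0,1)\|_{BL}\le|a|+|v-1|$ (couple through $\mathcal{N}(0,1)$) and convexity of $\|\cdot\|_{BL}$ (Jensen),
\[
\bigl\|\mathcal{L}\bigl(|X_1|(b_1-\hat\beta_1)\mid Y\bigr)-\mathcal{N}(0,1)\bigr\|_{BL}\le\int\bigl\|\mathcal{N}(a(b_{-1}),v(b_{-1}))-\mathcal{N}(0,1)\bigr\|_{BL}\,\Pi(db_{-1}\mid Y),
\]
and splitting the integral over the contraction event and its complement -- the latter contributing at most $2\,\Pi(\text{bad}\mid Y)=o_p(1)$ -- yields \eqref{goal}.
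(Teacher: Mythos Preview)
Your proposal is correct and follows essentially the same route as the paper: a slab prior on the coordinate of interest, a sparse prior on $b_{-1}$, the conditional posterior of $b_1$ given $(b_{-1},Y)$ is Gaussian, Jensen on the bounded-Lipschitz distance, and the whole thing reduces to $\ell_1$ posterior contraction of $b_{-1}$ under the projected design $P_1X_{-1}$, for which one must transfer the restricted-eigenvalue/compatibility condition from $X$ to $P_1X_{-1}$.

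The one difference worth noting is how the factorisation is obtained. You put the product prior directly on $(b_1,b_{-1})$ and then \emph{integrate out} $b_1$ to discover that the marginal model for $b_{-1}$ is the projected regression of $P_1Y$ on $P_1X_{-1}$; this is exact for the flat prior and approximate for a proper large-variance Gaussian, and you correctly flag the need to control the rank-one perturbation. The paper instead reparametrises to $b_1^*=b_1+\sum_{i\ge2}\gamma_i b_i$ and places the product prior on $(b_1^*,b_{-1})$: because the likelihood factors \emph{exactly} as a function of $b_1^*$ times a function of $b_{-1}$ (with design $W=(I-H)X_{-1}=P_1X_{-1}$), the posterior is an exact product for any proper Gaussian on $b_1^*$, and no marginalisation or perturbation argument is needed. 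The two constructions yield the same projected design $W$ and the same conditional-Gaussian structure, so the remaining work---your verification that the compatibility constant of $W$ is bounded below, which is the paper's Lemma~\ref{lem:rec.to.comp}---is identical. The reparametrisation buys exactness and removes the ``nearly degenerate projected design'' worry you mention; your route is slightly more concrete about what prior one is actually placing on $b$.
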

The measure used here to quantify the discrepancy between probability measures is the bounded-Lipschitz metric~\cite[page~1]{dudley1972speeds}. The convergence of a sequence of distributions to a fixed distribution in bounded-Lipschitz metric is equivalent to weak convergence. 

\section{The prior and its background stories.}
\subsection{How does de-biasing work?}\label{sec:freq.debias}
In sparse linear regression, penalized likelihood estimators such as the LASSO are often used and tend to give good global properties. One desirable property is the following bound on the $l_1$ loss.
\begin{equation}\label{eq:l1.control}
\mathbb{P}_\beta\left\{|\tilde{\beta}-\beta|_1>Cs^*\lambda_n\right\}\rightarrow 0\text{ as }n,p\rightarrow \infty\text{ for some }C>0,
\end{equation}
where $\lambda_n$ is as defined in assumption~\ref{assump.id}. For example, Bickel et al.~\cite[Theorem~7.1]{bickel2009simultaneous} showed that under the REC condition (assumption~\ref{assump.rec}) the LASSO estimator satisfies~\eqref{eq:l1.control}. 

\medskip

In general, penalized likelihood estimators introduce bias for the estimation of individual coordinates. To eliminate this bias, Zhang and Zhang~\cite{Zhang2014confidence} proposed a two-step procedure. First find a $\tilde{\beta}$, perhaps via a LASSO procedure that satisfies~\eqref{eq:l1.control}. Then define
\begin{equation*}
\hat{\beta}_1^{(ZZ)}=\arg\min_{b_1\in \mathbb{R}}|Y-X_{-1}\tilde{\beta}_{-1}-b_1X_1|^2,
\end{equation*}
The idea behind this estimator is to penalize the magnitude of all coordinates except the one of interest. Under assumptions~\ref{assump.id},~\ref{assump.rec} and~\ref{assump.dim}, the one-step estimator $\hat{\beta}_1^{(ZZ)}$ is asymptotically unbiased with expansion~\eqref{beta1.hat}. The same asymptotic behavior can be obtained in a single step, as in the next theorem. The idea of penalizing all coordinates but one to eliminate the bias is seen more clearly here.

\begin{theorem}\label{th.one.step}
Define
\begin{equation*}
\hat{\beta}^{(one-step)}=\arg\min_{b\in \mathbb{R}^p}\left(|Y-Xb|_2^2+\eta_n\sum_{i\geq 2}|b_i|\right).
\end{equation*}
Choose $\eta_n$ to be a large enough multiple of $n\lambda_n$. Under assumptions~\ref{assump.id},~\ref{assump.rec} and~\ref{assump.dim}, the one-step de-biasing estimator of $\beta_1$ achieves $l_1$ control~\eqref{eq:l1.control} and de-biasing simultaneously. The estimator for the first coordinate satisfies
\begin{equation*}
\hat{\beta}_1^{(one-step)}=\beta_1+\frac{X_1^T\epsilon}{|X_1|^2}+o_p\left(\frac{1}{\sqrt{n}}\right).
\end{equation*}
\end{theorem}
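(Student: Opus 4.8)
The plan is to run a LASSO-style ``basic inequality plus restricted eigenvalue'' argument to obtain the $\ell_1$ bound \eqref{eq:l1.control}, and then read the expansion directly off the stationarity condition in the unpenalized coordinate. Write $\hat b=\hat\beta^{(one-step)}$, $v=\hat b-\beta$, $S=\mathrm{supp}(\beta)$ with $|S|\le s^*$, and $\bar S=S\cup\{1\}$, so $|\bar S|\le s^*+1\le 3s^*$. Let $\mathcal E$ be the event $\{2\max_{1\le i\le p}|X_i^T\epsilon|\le\tfrac13\eta_n\}$. Since $X_i^T\epsilon\sim\mathcal N(0,|X_i|^2)$ with the column norms of order $\sqrt n$, a Gaussian tail bound and a union bound over the $p$ coordinates give $\mathbb P_\beta(\mathcal E)\to1$ once $\eta_n$ is a large enough multiple of $n\lambda_n=\sqrt{n\log p}$; note this also absorbs the $i=1$ term, which is only of order $\sqrt n\ll\eta_n$. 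The minimizer need not be unique when $p>n$, but every minimizer satisfies the inequalities and first-order conditions used below, so the statement is about any selected minimizer.

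First I would derive the $\ell_1$ bound. Comparing the objective at $\hat b$ and at $\beta$ and substituting $Y=X\beta+\epsilon$ gives the basic inequality
\begin{equation*}
|Xv|^2\le 2\epsilon^TXv+\eta_n\Big(\sum_{i\ge2}|\beta_i|-\sum_{i\ge2}|\hat b_i|\Big).
\end{equation*}
On $\mathcal E$ one bounds $2\epsilon^TXv\le 2\max_i|X_i^T\epsilon|\cdot|v|_1\le\tfrac13\eta_n|v|_1$, and, using that the unpenalized coordinate $1$ is simply folded into the enlarged support, $\sum_{i\ge2}|\beta_i|-\sum_{i\ge2}|\hat b_i|\le|v_{\bar S}|_1-|v_{\bar S^c}|_1$. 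Combining these yields $|Xv|^2+\eta_n|v_{\bar S^c}|_1\lesssim\eta_n|v_{\bar S}|_1$, and in particular, with $\eta_n$ taken large enough, the cone condition $|v_{\bar S^c}|_1\le c_2|v_{\bar S}|_1$. Then Assumption \ref{assump.rec} applied with the index set $\bar S$ — this is why the restricted eigenvalue condition is needed at sparsity level $3s^*$ rather than $s^*$ — gives $|Xv|^2\ge c'^2n|v_{\bar S}|_1^2/(3s^*)$, and chaining the two displays produces $|v_{\bar S}|_1\lesssim s^*\eta_n/n\asymp s^*\lambda_n$; with the cone condition this gives $|v|_1=|\hat b-\beta|_1\lesssim s^*\lambda_n$, which is \eqref{eq:l1.control}.

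Next I would extract the expansion from the Karush--Kuhn--Tucker conditions. Because $b_1$ is unpenalized, stationarity of the smooth objective in the first coordinate is exact: $X_1^T(Y-X\hat b)=0$. Writing $Y-X\hat b=\epsilon-Xv$ and expanding $X_1^TXv=|X_1|^2v_1+\sum_{i\ge2}(X_1^TX_i)v_i$, then dividing by $|X_1|^2$, gives
\begin{equation*}
\hat\beta_1^{(one-step)}=\beta_1+\frac{X_1^T\epsilon}{|X_1|^2}-\sum_{i\ge2}\gamma_i v_i .
\end{equation*}
It remains to show the remainder $\sum_{i\ge2}\gamma_iv_i$ is $o_p(1/\sqrt n)$, and this is the step where all three assumptions collide. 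Bounding $\big|\sum_{i\ge2}\gamma_iv_i\big|\le(\max_{i\ge2}|\gamma_i|)\,|v|_1$, Assumption \ref{assump.id} gives $\max_{i\ge2}|\gamma_i|\le c_1\lambda_n$, the $\ell_1$ bound gives $|v|_1\lesssim s^*\lambda_n$, so the remainder is $O_p(s^*\lambda_n^2)=O_p(s^*\log p/n)$, which is $o_p(1/\sqrt n)$ precisely by Assumption \ref{assump.dim}.

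The main work is this last bias-control step. The $\ell_1$-consistency argument alone only pins $\hat\beta_1$ down to within $O_p(s^*\lambda_n)$ of $\beta_1$, which is far coarser than the target $1/\sqrt n$; what upgrades it is the exact orthogonality $X_1^T(Y-X\hat b)=0$ produced by leaving coordinate $1$ unpenalized, together with the near-orthogonality of $X_1$ to the remaining columns, which buys the extra factor $\lambda_n$ that — in combination with Assumption \ref{assump.dim} — beats $1/\sqrt n$. The only other point requiring care is the bookkeeping that keeps the unpenalized coordinate inside the support set throughout the cone/REC step.
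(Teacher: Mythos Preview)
Your proof is correct and follows essentially the same route as the paper's: basic inequality, cone condition, REC to get the $\ell_1$ bound, then the first-order condition in the unpenalized coordinate combined with Assumptions~\ref{assump.id} and~\ref{assump.dim} to kill the bias term. The only cosmetic difference is that the paper splits into the two cases $1\in S$ and $1\notin S$, whereas you handle both at once by working with the enlarged support $\bar S=S\cup\{1\}$ throughout, which is a mild streamlining of the same argument.
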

\begin{proof}
In the proof of theorem~\ref{th.one.step} we will refer to the one step estimator as $\hat{\beta}$. We will first show that $\hat{\beta}$ satisfies~\eqref{eq:l1.control}. We know that when the penalty involves all coordinates of $b$, then the bound on the $l_1$ norm is true~\cite[Theorem~7.1]{bickel2009simultaneous}. It turned out that leaving one term out the of penalty does not ruin that property.

\medskip

As in the proof of~\cite[Theorem~7.1]{bickel2009simultaneous}, we compare the evaluation of the penalized likelihood function at $\hat{\beta}$ and the truth $\beta$ using the definition of $\hat{\beta}$.
\begin{equation*}
|Y-X\hat{\beta}|_2^2+\eta_n|\hat{\beta}_{-1}|_1\leq |Y-X\beta|_2^2+\eta_n|\beta_{-1}|_1.
\end{equation*}
Plug in $Y=X\beta+\epsilon$, the above is reduced to
\begin{equation*}
|X(\hat{\beta}-\beta)|_2^2\leq 2\sum_{i\leq n}\xi_i(\hat{\beta}_i-\beta_i)+\eta_n(|\beta_{-1}|_1-|\hat{\beta}_{-1}|_1),
\end{equation*}
where $\xi_i=X_i^T\epsilon$. With high probability $|\max_{i\leq n}\xi_i|\leq R=C_2 n\lambda_n$, in which case we have
\begin{equation}\label{compare.ineq}
|X(\hat{\beta}-\beta)|_2^2\leq 2R|\hat{\beta}-\beta|_1+\eta_n(|\beta_{-1}|_1-|\hat{\beta}_{-1}|_1).
\end{equation}
From here we need to discuss two situations. First consider the case where $1$ is in the $S$, the support of $\beta$. The expression above is bounded by
\begin{equation*}
(2R+\eta_n)|(\hat{\beta}-\beta)_S|_1+(2R-\eta_n)|(\hat{\beta}-\beta)_{S^C}|_1.
\end{equation*}
By choosing $\eta_n$ to be a large enough multiple of $n\lambda_n$, we have
\begin{equation*}
|X(\hat{\beta}-\beta)|_2^2\leq c_1n\lambda_n|(\hat{\beta}-\beta)_S|_1-c_2n\lambda_n|(\hat{\beta}-\beta)_{S^C}|_1.
\end{equation*}
Since the lefthand side is nonnegative, the above implies
\begin{equation}\label{cone.1}
|(\hat{\beta}-\beta)_{S^C}|_1\leq \frac{c_1}{c_2}|(\hat{\beta}-\beta)_S|_1.
\end{equation}
Therefore under assumption REC($c_0/c_1$, $\kappa$), we can further bound the prediction loss by
\begin{align*}
& c_1\sqrt{s^*n\log p}\cdot|(\hat{\beta}-\beta)_S|_2\\
\leq & \frac{c_1}{\kappa}\sqrt{s^*\log p}\cdot|X(\hat{\beta}-\beta)|_2.
\end{align*}
So far we have shown with high probability,
\begin{equation*}
|X(\hat{\beta}-\beta)|_2\leq \frac{c_1}{\kappa}\sqrt{s^*\log p}.
\end{equation*}
Under the REC assumption, we can go back to bound the $l1$ loss.
\begin{equation*}
|(\hat{\beta}-\beta)_S|_1\leq \sqrt{s^*}|(\hat{\beta}-\beta)_S|_2\leq \frac{1}{\kappa}\sqrt{\frac{s^*}{n}}|X(\hat{\beta}-\beta)|_2\leq \frac{c_1}{\kappa^2}s^*\lambda_n.
\end{equation*}
Therefore with~\eqref{cone.1} we have
\begin{equation*}
|\hat{\beta}-\beta|_1\leq \left(1+\frac{c_1}{c_2}\right)\frac{c_1}{\kappa^2}s^*\lambda_n.
\end{equation*}
The proof for the other case turned out to be messier. But the general idea remains the same. When $1\in S^C$, we can bound the RHS of~\eqref{compare.ineq} by
\begin{equation*}
(2R+\eta_n)\left|(\hat{\beta}-\beta)_{S\cup \{1\}}\right|_1+(2R-\eta_n)\left|(\hat{\beta}-\beta)_{S^C\char`\\ \{1\}}\right|_1,
\end{equation*}
Choosing $\lambda$ to be a large multiple of $\sqrt{n\log p}$ as in the $1\in S$ case, we have
\begin{equation*}
|X(\hat{\beta}-\beta)|_2^2\leq c_1n\lambda_n|(\hat{\beta}-\beta)_{S\cup\{1\}}|_1-c_2n\lambda_n|(\hat{\beta}-\beta)_{S^C\char`\\ \{1\}}|_1,
\end{equation*}
which implies
\begin{equation}\label{cone.2}
|(\hat{\beta}-\beta)_{S^C\char`\\ \{1\}}|_1\leq \frac{c_1}{c_2}|(\hat{\beta}-\beta)_{S\cup\{1\}}|_1.
\end{equation}
Again use assumption~\ref{assump.rec} to deduce the $l_1$ control~\eqref{eq:l1.control}.

Observe that the penalty term does not involve $b_1$.
\begin{align}
\nonumber\hat{\beta}_1= & \arg\min_{b_1\in\mathbb{R}}|Y-X_{-1}\hat{\beta}_{-1}-b_1X_1|_2^2\\
\label{beta1hat.rep}= & \beta_1+\sum_{i\geq 2}\gamma_i(\beta_i-\hat{\beta}_i)+\frac{X_1^T\epsilon}{|X_1|^2}.
\end{align}
We only need to show the second term in~\eqref{beta1hat.rep} is of order $o_p(1/\sqrt{n})$. Bound the absolute value of that term with
\begin{equation*}
\max_{i\geq 2}|\gamma_i|\cdot|\hat{\beta}_S-\beta_S|_1\leq \left(c_1\lambda_n\right)\left(C_1s^*\lambda_n\right),
\end{equation*}
by assumption~\ref{assump.id} and the $l_1$ control~\eqref{eq:l1.control}. That is then bounded by $O_p(s^*\lambda_n^2)=o_p(1/\sqrt{n})$ by assumption~\ref{assump.dim}.

\end{proof}

\begin{remark}
With some careful manipulation the REC($3s^*, c_2$) condition as in assumption~\ref{assump.rec} can be reduced to REC($s^*,c_2$). The proof would require an extra step of bounding $|\hat{\beta}_1-\beta_1|$ by $o_p(|\hat{\beta}_S-\beta_S|_1)+O_p(1/\sqrt{n})$.
\end{remark}

The ideas in the proofs for the two de-biasing estimators $\hat{\beta}_1^{(ZZ)}$ and $\hat{\beta}_1^{(one-step)}$ are similar. Ideally we want to run the regression
\begin{equation}\label{eq:ideal.reg}
\arg\min_{b_1\in\mathbb{R}}|Y-X_{-1}\beta_{-1}-b_1X_1|^2.
\end{equation}
That gives a perfectly efficient and unbiased estimator. However $\beta_{-1}$ is not observed. It is natural to replace it with an estimator which is made globally close to the truth $\beta_{-1}$ using penalized likelihood approach. As seen in the proof of Theorem~\ref{th.one.step}, most of the work goes into establishing global $l_1$ control~\eqref{eq:l1.control}. The de-biasing estimator is then obtained by running an ordinary least squares regression like~\eqref{eq:ideal.reg}, replacing $\beta_{-1}$ by some estimator satisfying~\eqref{eq:l1.control}, so that the solution to the least squares optimization is close to the solution of~\eqref{eq:ideal.reg} with high probability.

\subsection{Bayesian analogue of de-biasing estimators.}
We would like to give a Bayesian analogue to the de-biasing estimators discussed above. As pointed out in the last section, it is essential to establish $l_1$ control on the vector $b_{-1}-\beta_{-1}$. Castillo et al.~\cite{castillo2015bayesian} and Gao et al.~\cite{gao2015general} have proposed priors that penalize sparsity of submodel dimension and provided theoretical guarantees such as LASSO-type contraction rates under the posterior distribution. This is the prior construction of Gao et al.~\cite[Section 3]{gao2015general}. 
\begin{enumerate}
\item The size $s$ of the dimension of the sub-model in the direction orthogonal to $X_1$ has probability mass function $\pi(s)\propto \exp(-Ds\log p)$.

\item $S|s\sim Unif(Z_s:=\{S\subset \{1,...,p\}:|S|=s,X_S\text{ is full rank}\})$.

\item Given the subset selection $S$, the coefficients $b_S$ has density $f_S(b_S)\propto\exp(-\eta_n |X_Sb_S|)$ for suitably chosen $\eta_n$.
\end{enumerate}

Gao et al.~\cite{gao2015general} gave conditions under which we have a good $l_1$ posterior contraction rate.
\begin{lemma}\label{lem:post.l1}(Corollary 5.4,~\cite{gao2015general})
If the design matrix $X$ satisfies
\begin{equation}\label{eq:comp}
\kappa_0((2+\delta)s^*,X)=\inf_{|b|_0\leq (2+\delta)s^*}\frac{\sqrt{s^*}|Xb|}{\sqrt{n}|b|_1}\geq c
\end{equation}
for some positive constant $c,\delta$, then there is constant $c_3>0$ and large enough $D>0$ for which
\begin{equation*}
\mathbb{P}_\beta \mu_Y\left\{|b-\beta|_1>c_3s^*\lambda_n\right\}\rightarrow 0,
\end{equation*}
\end{lemma}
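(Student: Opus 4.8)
The plan is to reproduce the structure behind Corollary~5.4 of~\cite{gao2015general}: the standard ``evidence lower bound versus sliced numerator'' scheme for posterior contraction, specialized to the prior of Gao et al. Write the log-likelihood ratio $\ell_n(b)=-\tfrac12|Y-Xb|^2+\tfrac12|Y-X\beta|^2$, so that under $Y=X\beta+\epsilon$ one has $\ell_n(b)=\epsilon^TX(b-\beta)-\tfrac12|X(b-\beta)|^2$, and express the posterior mass of the bad set $B=\{b:|b-\beta|_1>c_3 s^*\lambda_n\}$ as
\[
\mu_Y(B)=\frac{\int_B e^{\ell_n(b)}\,d\pi(b)}{\int e^{\ell_n(b)}\,d\pi(b)}.
\]
It then suffices to produce a constant $C_0$ so that, with $\mathbb{P}_\beta$-probability tending to one, the denominator is at least $e^{-C_0 s^*\log p}$ while, for $c_3$ and $D$ chosen large, the numerator is at most $e^{-C_1 s^*\log p}$ with $C_1>C_0$. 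All estimates are carried out on the high-probability event $\mathcal{E}$ on which $\max_{1\le i\le p}|X_i^T\epsilon|\le C\sqrt{n\log p}$ and $\max_{|T|\le m}|\Pi_T\epsilon|^2\le Cm\log p$ for all $m\le p$, where $\Pi_T$ is the orthogonal projection onto the column span of $X_T$; both follow from sub-Gaussian maximal inequalities and $\chi^2$ tail bounds together with a union bound over the $\binom{p}{m}\le e^{m\log p}$ choices of $T$, using the normalization $\max_i|X_i|^2\lesssim n$.

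For the denominator I would keep only the contribution of the true support $S^*$ and of the small $\ell_1$-ball $\mathcal{B}=\{b:\operatorname{supp}(b)\subseteq S^*,\ |b-\beta|_1\le (n\lambda_n)^{-1}\}$. On $\mathcal{B}$ one has $|\epsilon^T X(b-\beta)|\le(\max_i|X_i^T\epsilon|)\,|b-\beta|_1\lesssim s^*\log p$ on $\mathcal{E}$, and $|X(b-\beta)|^2\lesssim n|b-\beta|_1^2\lesssim s^*\log p$, so $e^{\ell_n(b)}\ge e^{-C s^*\log p}$ throughout $\mathcal{B}$. The prior mass of $\mathcal{B}$ is $\pi(s^*)\,|Z_{s^*}|^{-1}$ times the $f_{S^*}$-probability of $\mathcal{B}$; the first factor is $\gtrsim e^{-Ds^*\log p}\binom{p}{s^*}^{-1}\ge e^{-(D+1)s^*\log p}$, and since on this tiny ball the factor $e^{-\eta_n|X_{S^*}b_{S^*}|}$ varies only by a bounded multiple, normalizing $f_{S^*}$ shows the last factor is $\gtrsim e^{-Cs^*\log p}$ as well. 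Multiplying gives the denominator lower bound with $C_0$ depending on $D$.

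The numerator is the main obstacle, and I would split it by submodel dimension $s$ and selected support $S$, $|S|=s$. For $s>(1+\delta)s^*$ one discards the likelihood almost entirely: since $X(b-\beta)$ lies in the column span of $X_{S\cup S^*}$, completing the square gives $e^{\ell_n(b)}\le e^{\frac12|\Pi_{S\cup S^*}\epsilon|^2}\le e^{C(s+s^*)\log p}$ on $\mathcal{E}$, uniformly over such $b$; summing against the prior weights $\pi(s)\propto e^{-Ds\log p}$ over the $\binom{p}{s}\le e^{s\log p}$ supports shows that, once $D$ is large enough relative to $C$ and $1/\delta$, this total is $o(e^{-C_1 s^*\log p})$ --- this is the step that fixes $D$, and hence $C_0$, and also forces the posterior onto submodels of dimension $\le(1+\delta)s^*$. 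For $s\le(1+\delta)s^*$ the set $S\cup S^*$ has cardinality $\le(2+\delta)s^*$, so~\eqref{eq:comp} applies to $b-\beta$ and yields $|X(b-\beta)|^2\ge \tfrac{c^2 n}{s^*}|b-\beta|_1^2$; hence on $B$ the quadratic term in $\ell_n$ grows like $\tfrac{n}{s^*}|b-\beta|_1^2$, which by Cauchy--Schwarz dominates the linear term $|\epsilon^T X(b-\beta)|\le C\sqrt{n\log p}\,|b-\beta|_1$ as soon as $c_3$ is a large enough multiple of $C/c^2$, giving $e^{\ell_n(b)}\le e^{-\frac{c^2 n}{4 s^*}|b-\beta|_1^2}\le e^{-\frac{c^2 c_3^2}{4}s^*\log p}$ uniformly on $B$. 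Integrating against the proper density $f_S$ and summing over the $\le e^{(2+\delta)s^*\log p}$ relevant supports and the prior weights leaves $o(e^{-C_1 s^*\log p})$ once $c_3$ is large relative to $C,c,\delta$. Combining the two halves, $\mu_Y(B)\le e^{-(C_1-C_0)s^*\log p}\to 0$ in $\mathbb{P}_\beta$.

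The delicate point is not any individual estimate --- each is a routine sub-Gaussian or Cauchy--Schwarz bound --- but the order in which the constants are fixed: first $D$ large relative to $C$ and $\delta$ (which simultaneously controls the overfitting term, keeps the posterior dimension below $(1+\delta)s^*$ so that~\eqref{eq:comp} genuinely applies to $S\cup S^*$, and pins down the denominator exponent $C_0$), and only then $c_3$ large relative to everything already chosen. Getting this dependency chain internally consistent is where the real work lies.
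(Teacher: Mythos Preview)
The paper does not prove this lemma at all: it is stated as a quotation of Corollary~5.4 in~\cite{gao2015general} and used as a black box. So there is no ``paper's own proof'' to compare against; your proposal is a reconstruction of the argument behind the cited result rather than a competitor to anything in the present paper.

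That said, your sketch is a faithful outline of the standard evidence--lower--bound versus sliced--numerator argument that underlies the Gao et al.\ result. The decomposition into a denominator lower bound via the true support, a large-dimension tail killed by the $e^{-Ds\log p}$ prior weight, and a small-dimension piece on which the compatibility condition~\eqref{eq:comp} turns the quadratic term of $\ell_n$ into an $\ell_1$ penalty, is exactly the structure of the cited proof. Your remark about the order in which $D$ and $c_3$ must be fixed is the genuinely non-trivial bookkeeping point: since the denominator exponent $C_0$ grows with $D$, one must check that the large-$s$ numerator exponent grows \emph{faster} with $D$ (it does, roughly like $(1+\delta)D$ versus $D$), so that $D$ can first be chosen to beat the denominator, and only then is $c_3$ chosen large relative to the now-fixed $C_0$. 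One minor looseness: in the denominator bound you should also verify that the normalizing constant of $f_{S^*}$ (an integral of $e^{-\eta_n|X_{S^*}b|}$ over $\mathbb{R}^{s^*}$) does not cost more than $e^{Cs^*\log p}$; this follows from the choice $\eta_n\asymp\sqrt{\log p/n}$ in~\cite{gao2015general} together with a change of variables, but it is worth writing out since the prior density is not the more common independent-Laplace form.
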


We slightly modify the sparse prior of Gao et al.~\cite{gao2015general} to give good, asymptotically normal posterior behavior for a single coordinate. As we discussed in the last section, classical approaches to de-biasing exploit the idea of penalizing all coordinates except the one of interest. Our prior construction mimics that idea by putting the sparse prior only on $b_{-1}$.

\subsection{The prior.}
Denote the matrix projecting $\mathbb{R}^n$ to $span(X_1)$ by $H$. Under the model where $Y\sim\mathcal{N}(Xb,I_n)$, the likelihood function has the factorization
\begin{align*}
\mathcal{L}_n(b)= & \frac{1}{\sqrt{n}(2\pi)^{n/2}}\exp\left(-\frac{|Y-Xb|^2}{2}\right)\\
= & \frac{1}{\sqrt{n}(2\pi)^{n/2}}\exp\left(-\frac{|HY-HXb|^2}{2}\right)\exp\left(-\frac{|(I-H)Y-(I-H)Xb|^2}{2}\right).
\end{align*}
Write $W=(I-H)X_{-1}$ and reparametrize $b_1^*=b_1+\sum_{i\geq 2}\gamma_i b_i$ with $\gamma_i$ as defined in assumption~\ref{assump.id}. The likelihood $\mathcal{L}_n(b)$ can be rewritten as a constant multiple of
\begin{equation*}
\exp\left(-\frac{|HY-b_1^*X_1|^2}{2}\right)\exp\left(-\frac{|(I-H)Y-W b_{-1}|^2}{2}\right).
\end{equation*}
The likelihood factorizes into a function of $b_1^*$ and $b_{-1}$. Therefore if we make $b_1^*$ and $b_{-1}$ independent under the prior, they will be independent under the posterior. We put a Gaussian prior on $b_1^*$ to mimic the ordinary least square optimization step in the classical approaches. We put a sparse prior analogue to that of Gao et al.~\cite[section~3]{gao2015general} on $b_{-1}$, using $W$ as the design matrix in the prior construction. By lemma~\ref{lem:post.l1}, $b_{-1}$ is close to $\beta_{-1}$ in $l_1$ norm with high posterior probability as long as $\kappa_o((2+\delta)s^*,W)$ is bounded away from 0.

\medskip

We make $b_1^*$ and $b_{-1}$ independent under the prior distribution. The product distribution corresponds to a prior distribution on the original vector $b$. Note that under the prior distribution $b_1$ and $b_{-1}$ are not necessarily independent.

\medskip

This modified prior also has the effect of eliminating a bias term, in a fashion analogues to that of the two-step procedure $\hat{\beta}_1^{(ZZ)}$. The joint posterior distribution of $b_1^*$ and $b_{-1}$ factorizes into two marginals. In the $X_1$ direction, the posterior distribution of $b_1^*$ is asymptotically Gaussian centered around $\frac{X_1^TY}{|X_1|^2}=\beta_1^*+\frac{X_1^T\epsilon}{|X_1|^2}$. After we reverse the reparametrization we want the posterior distribution of $b_1$ to be asymptotically Gaussian centered around an efficient estimator $\hat{\beta}_1=\beta_1+\frac{X_1^T\epsilon}{|X_1|^2}+o_p(1/\sqrt{n})$. Therefore we need to show $b_1^*-b_1$ is very close to $\beta_1^*-\beta_1$. That can be obtained from the $l_1$ control on $b_{-1}-\beta_{-1}$ under the posterior. In the next section we will give the proof to our main posterior asymptotic normality result (Theorem~\ref{main.th}) in detail.

\section{Proof of Theorem~\ref{main.th}.}

Since that prior and the likelihood of $b_1^*$ are both Gaussian, we can work out the exact posterior distribution.
\begin{equation*}
b_1^*|Y\sim \mathcal{N}\left(\frac{\sigma_n^2}{1+|X_1|^2\sigma_n^2}X_1^T Y, \frac{\sigma_n^2}{1+|X_1|^2\sigma_n^2}\right).
\end{equation*}
Since $b_1^*$ and $b_{-1}$ are independent under the posterior distribution, the above is also the distribution of $b_1^*$ given $Y$ and $b_{-1}$. That implies the distribution of $|X_1|(b_1-\hat{\beta}_1)$ given $Y$ and $b_{-1}$ is
\begin{equation}\label{post.b1}
\mathcal{N}\left(|X_1|\left(\frac{\sigma_n^2}{1+|X_1|^2\sigma_n^2}X_1^T Y-\sum_{i\geq 2}\gamma_i b_i-\hat{\beta}_1\right), \frac{\sigma_n^2|X_1|^2}{1+|X_1|^2\sigma_n^2}\right).
\end{equation}
Note that without conditioning on $b_{-1}$, the posterior distribution of $b_1$ is not necessarily Gaussian.

\medskip

The goal is to show the bounded-Lipschitz metric between the posterior distribution of $b_1$ and $\mathcal{N}(\hat{\beta}_1,1/|X_1|^2)$ goes to 0 under the truth. From Jensen's inequality and the definition of the bounded-Lipschitz norm we have
\begin{align*}
& \left\lVert\mathcal{L}(|X_1|(b_1-\hat{\beta}_1)|Y)-\mathcal{N}(0,1)\right\rVert_{BL}\\
\leq & \mu_Y^{b_{-1}}\left\lVert\mathcal{L}(|X_1|(b_1-\hat{\beta}_1)|Y,b_{-1})-\mathcal{N}(0,1)\right\rVert_{BL}.
\end{align*}
For simplicity denote the posterior mean and variance in~\eqref{post.b1} as $\nu_n$ and $\tau_n^2$ respectively. The bounded-Lipschitz distance between two normals $\mathcal{N}(\mu_1,\sigma_1^2)$ and $\mathcal{N}(\mu_2,\sigma_2^2)$ is bounded by $(|\mu_1-\mu_2|+|\sigma_1-\sigma_2|)\wedge 2$. Hence the above is bounded by
\begin{equation*}
\mu_Y^{b_{-1}}\left(|\nu_n|\wedge 2\right)+\mu_Y^{b_{-1}}\left((|\tau_n-1|)\wedge 2\right).
\end{equation*}
Therefore to obtain the desired convergence in~\eqref{goal}, we only need to show

\begin{equation}\label{mean.conv}
\mathbb{P}_\beta \mu_Y^{b_{-1}}\left(|\nu_n|\wedge 2\right)\rightarrow 0,\;\;\;\text{and}
\end{equation}
\begin{equation}\label{var.conv}
\mathbb{P}_\beta \mu_Y^{b_{-1}}\left((|\tau_n-1|)\wedge 2\right)\rightarrow 0.
\end{equation}
To show~\eqref{mean.conv}, notice that the integrand is bounded. Hence it is equivalent to show convergence in probability. Write
\begin{align}
\nonumber|\nu_n|= & \frac{\sigma_n^2|X_1|^d}{1+\sigma_n^2|X_1|^2}\left(\beta_1+\frac{X_1^T\epsilon}{|X_1|^2}+\sum_{i\geq 2}\gamma_i\beta_i\right)\\
\nonumber& -\sum_{i\geq 2}\gamma_ib_i-|X_1|\left(\beta_1+\frac{X_1^T\epsilon}{|X_1|^2}+o_p\left(\frac{1}{\sqrt{n}}\right)\right)\\
\label{mean.bound}\leq & \frac{|X_1|}{1+\sigma_n^2|X_1|^2}\left|\beta_1+\frac{X_1^T\epsilon}{|X_1|}+\sum_{i\geq 2}\gamma_i\beta_i\right|+\sum_{i\geq 2}\gamma_i(\beta_i-b_i)+o_p(1).
\end{align}
The first term is no longer random in $b$, and it can be made as small as we with now that it is decreasing in $\sigma_n$. If we set $\sigma_n^2\gg |\beta|_1\lambda_n/|X_1|$, this term is of order $o_p(1)$. 

\medskip

For the second term, we will apply lemma~\ref{lem:post.l1} to deduce that this term also goes to 0 in $\mathbb{P}_\beta\mu_Y^{b_{-1}}$ probability. To apply the posterior contraction result we need to establish the compatibility assumption~\eqref{eq:comp} on $W$. 

\begin{lemma}\label{lem:rec.to.comp}
Under assumption~\ref{assump.id},~\ref{assump.rec},~\ref{assump.dim} and the constraint $|X_1|=O(\sqrt{n})$, the matrix $W=(I-H)X_{-1}$ satisfies
\begin{equation*}
\kappa_0((2+\delta)s^*,W)=\inf_{|b|_0\leq (2+\delta)s^*}\frac{\sqrt{s^*}|Wb|}{\sqrt{n}|b|_1}\geq c
\end{equation*}
for some $c,\delta>0$.
\end{lemma}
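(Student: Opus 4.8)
The plan is to reduce the claim to the restricted eigenvalue condition (Assumption~\ref{assump.rec}) by viewing $W$ as $X$ acting on a slightly perturbed sparse vector, and then showing that the perturbation is of smaller order than the restricted eigenvalue floor. First I would unpack $W$. Since $H$ is the orthogonal projection onto $\mathrm{span}(X_1)$, we have $(I-H)X_i=X_i-\gamma_i X_1$ with $\gamma_i=X_1^TX_i/|X_1|^2$, so for $b\in\mathbb{R}^{p-1}$ (indexed by $2,\dots,p$)
\[
Wb=\sum_{i\ge 2}b_i(X_i-\gamma_i X_1)=X\tilde b,\qquad \tilde b:=\Bigl(-\textstyle\sum_{i\ge 2}\gamma_i b_i,\;b_2,\dots,b_p\Bigr)\in\mathbb{R}^p.
\]
Thus it suffices to bound $|X\tilde b|$ from below. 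Write $\bar b=(0,b_2,\dots,b_p)$ for the zero-padding of $b$, so that $X\tilde b=X\bar b+\tilde b_1 X_1$, $|\bar b|_2=|b|_2$, and $|\bar b|_0=|b|_0\le(2+\delta)s^*$.

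Next I would lower-bound the ``main'' term $|X\bar b|$. Choosing $\delta\le 1$ so that $(2+\delta)s^*\le 3s^*$, I apply REC($3s^*,c_2$) with $J=\mathrm{supp}(\bar b)$: then $\bar b_{J^c}=0$, so the cone constraint $|\bar b_{J^c}|_1\le c_2|\bar b_J|_1$ holds trivially, and Assumption~\ref{assump.rec} gives $|X\bar b|\ge c'\sqrt n\,|\bar b_J|=c'\sqrt n\,|b|_2$. Then I would control the correction term $|\tilde b_1|\,|X_1|$. By Assumption~\ref{assump.id}, $|\tilde b_1|\le\max_{i\ge2}|\gamma_i|\cdot|b|_1\le c_1\lambda_n|b|_1$; by $|b|_0\le(2+\delta)s^*$, $|b|_1\le\sqrt{(2+\delta)s^*}\,|b|_2$; and using $|X_1|=O(\sqrt n)$ together with $\lambda_n\sqrt n=\sqrt{\log p}$,
\[
|\tilde b_1|\,|X_1|\ \le\ O(1)\cdot c_1\,\lambda_n\sqrt n\,\sqrt{(2+\delta)s^*}\,|b|_2\ =\ O\!\bigl(\sqrt{s^*\log p}\,\bigr)\,|b|_2\ =\ o(\sqrt n)\,|b|_2,
\]
where the last step uses Assumption~\ref{assump.dim} ($s^*\log p=o(\sqrt n)$, hence $\sqrt{s^*\log p}=o(n^{1/4})=o(\sqrt n)$).

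Finally, combining the two estimates by the triangle inequality, $|Wb|=|X\tilde b|\ge\bigl(c'-o(1)\bigr)\sqrt n\,|b|_2\ge\tfrac{c'}{2}\sqrt n\,|b|_2$ for all $n$ large, uniformly over $b\ne0$ with $|b|_0\le(2+\delta)s^*$ (all bounds depend on $b$ only through $|b|_2,|b|_1,|b|_0$, so uniformity is automatic). Converting back via $|b|_1\le\sqrt{(2+\delta)s^*}\,|b|_2$,
\[
\kappa_0\bigl((2+\delta)s^*,W\bigr)=\inf_{|b|_0\le(2+\delta)s^*}\frac{\sqrt{s^*}\,|Wb|}{\sqrt n\,|b|_1}\ \ge\ \frac{c'}{2\sqrt{2+\delta}}\ =:\ c\ >\ 0.
\]

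The one genuinely non-routine point is the second display: it is exactly where Assumptions~\ref{assump.id} and~\ref{assump.dim} and the normalization $|X_1|=O(\sqrt n)$ combine to force the de-biasing correction $\tilde b_1X_1$ to be of smaller order than the restricted-eigenvalue floor $c'\sqrt n|b|_2$; everything else is bookkeeping. (Alternatively, for $s^*\ge 2$ one can skip the correction estimate: take $J=\{1\}\cup\mathrm{supp}(b)$, which has size $\le(2+\delta)s^*+1\le 3s^*$ once $\delta\le 1-1/s^*$, and apply REC directly to $\tilde b$ to get $|X\tilde b|\ge c'\sqrt n\,|\tilde b|_2\ge c'\sqrt n\,|b|_2$ in one line; the perturbation argument above is the version that uses all the stated hypotheses and also covers $s^*=1$.) I do not anticipate any serious obstacle beyond this estimate.
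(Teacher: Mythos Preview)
Your proof is correct and follows essentially the same route as the paper: write $Wb=X\bar b-\bigl(\sum_{i\ge2}\gamma_ib_i\bigr)X_1$, lower-bound $|X\bar b|$ via the restricted eigenvalue condition (Assumption~\ref{assump.rec}) applied to a vector supported on at most $3s^*$ coordinates, and show the correction term is $o(1)$ relative to the REC floor using Assumptions~\ref{assump.id},~\ref{assump.dim} and $|X_1|=O(\sqrt n)$. The only organizational difference is that the paper first passes from REC to a compatibility bound for $X$ and then from $X$ to $W$, whereas you apply REC directly to $\bar b$ and convert $|b|_2$ to $|b|_1$ only at the end; your alternative one-line route via $J=\{1\}\cup\mathrm{supp}(b)$ is a legitimate shortcut the paper does not take.
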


We will prove the lemma after the proof of Theorem~\ref{main.th}. 

\medskip

To show~\eqref{var.conv}, Note that the integrand is not a random quantity. It suffices to show
\begin{equation*}
|\tau_n-1|=\left|\frac{\sigma_n^2|X_1|}{1+\sigma_n^2|X_1|^2}-\frac{1}{|X_1|}\right|\rightarrow 0.
\end{equation*}
That is certainly true for a $\{\sigma_n\}$ sequence chosen large enough. Combine~\eqref{mean.conv}, ~\eqref{var.conv} and the bound on the bounded Lipschitz distance, we have shown
\begin{equation*}
\mathbb{P}_\beta\left\lVert\mathcal{L}\left(|X_1|(b_1-\hat{\beta}_1)|Y\right)-\mathcal{N}(0,1)\right\rVert_{BL}\rightarrow 0.
\end{equation*}

\begin{proof}[Proof of lemma~\ref{lem:rec.to.comp}]
We will justify the compatibility assumption on $W$ in two steps. First we will show that the compatibility assumption of the $X$ matrix follows from the REC assumption~\ref{assump.rec}. Then we will show that the compatibility constant of $X$ and $W$ are not very far apart. 

\medskip

Let us first show that under assumption~\ref{assump.rec}, there exist constants $0<\delta<1$ and $c>0$, for which
\begin{equation*}
\kappa_0((2+\delta)s^*,X)=\inf_{|b|_0\leq (2+\delta)s^*}\frac{\sqrt{s^*}|Xb|}{\sqrt{n}|b|_1}\geq c.
\end{equation*}
Denote the support of $g$ as $S$. We have
\begin{align*}
\kappa_0((2+\delta)s^*, X) \geq & \inf_{|b|_0\leq (2+\delta)s^*}\frac{1}{\sqrt{2+\delta}}\frac{|Xb|}{\sqrt{n}|b_S|}\\
\geq & \min_{\substack{J\subset[p],\\ |J|\leq 3s^*}}\inf_{\substack{b\neq 0, \\ \left|b_{J^C}\right|_1\leq c_2|b_J|_1}}\frac{|Xb|}{\sqrt{n}|b_J|}\\
=& \kappa(3s^*,c_2)>0.
\end{align*}
Now, under assumptions~\ref{assump.id},~\ref{assump.rec} and~\ref{assump.dim}, we will show that there exist constants $0<\delta'<1$ and $c'>0$, for which
\begin{equation*}
\kappa_0((2+\delta')s^*,W)\geq \kappa_0((2+\delta)s^*,X)+o(1).
\end{equation*}
For $g\in\mathbb[R]^{p-1}$, we have
\begin{align*}
|Wg|= & \left|X\begin{bmatrix} 0\\g \end{bmatrix}-\sum_{i\geq 2}\gamma_ig_i\right|\\
\geq & \left|X\begin{bmatrix} 0\\g \end{bmatrix}\right|-\lambda_n|g_1|
\end{align*}
by assumption~\ref{assump.id}. Deduce that
\begin{align*}
\kappa_0((2+\delta')s^*,W) =& \inf_{|b|_0\leq (2+\delta)s^*}\frac{\sqrt{s^*}|Wb|}{\sqrt{n}|b|_1}\\
\geq & \kappa_0((2+\delta')s^*+1,X)-\sqrt{\frac{s^*}{n}}\lambda_n\\
=& \kappa_0((2+\delta')s^*+1,X)-\frac{\sqrt{s^*\log p}}{n}.
\end{align*}
The second term if order $o(1)$ under assumption~\ref{assump.dim}.

\end{proof}

\section*{Acknowledgement}
I would like to thank my advisor, Professor Pollard, for his expert advise and extraordinary support throughout this project.

\bibliographystyle{plain}
\bibliography{dana}

\begin{thebibliography}{1}

\bibitem{bickel2009simultaneous}
Peter~J Bickel, Ya'acov Ritov, and Alexandre~B Tsybakov.
\newblock Simultaneous analysis of {L}asso and {D}antzig selector.
\newblock {\em Annals of Statistics}, 37(4):1705--1732, 2009.

\bibitem{candes2007dantzig}
Emmanuel Candes and Terence Tao.
\newblock The {D}antzig selector: Statistical estimation when p is much larger
  than n.
\newblock {\em Annals of Statistics}, 35(35):2313--2351, 2007.

\bibitem{castillo2015bayesian}
Ismael Castillo, Johannes Schmidt-Hieber, Aad Van~der Vaart, et~al.
\newblock Bayesian linear regression with sparse priors.
\newblock {\em Annals of Statistics}, 43(5):1986--2018, 2015.

\bibitem{dudley1972speeds}
R.M. Dudley.
\newblock Speeds of metric probability convergence.
\newblock {\em Zeitschrift f{\"u}r Wahrscheinlichkeitstheorie und Verwandte
  Gebiete}, 22(4):323--332, 1972.

\bibitem{gao2015general}
Chao Gao, Aad~W van~der Vaart, and Harrison~H Zhou.
\newblock A general framework for {B}ayes structured linear models.
\newblock {\em arXiv:1506.02174}, 2015.

\bibitem{pollard2002user}
David Pollard.
\newblock {\em A User's Guide to Measure Theoretic Probability}, volume~8 of
  {\em Cambridge Series in Statistical and Probabilistic Mathematics}.
\newblock Cambridge University Press, 2002.

\bibitem{tibshirani1996regression}
Robert Tibshirani.
\newblock Regression shrinkage and selection via the {L}asso.
\newblock {\em Journal of the Royal Statistical Society. Series B
  (Methodological)}, 58(1):267--288, 1996.

\bibitem{van2000asymptotic}
Aad~W Van~der Vaart.
\newblock {\em Asymptotic Statistics}, volume~3 of {\em Cambridge Series in
  Statistical and Probabilistic Mathematics}.
\newblock Cambridge university press, 2000.

\bibitem{Zhang2014confidence}
Cun-Hui Zhang and Stephanie~S Zhang.
\newblock Confidence intervals for low dimensional parameters in high
  dimensional linear models.
\newblock {\em Journal of the Royal Statistical Society: Series B (Statistical
  Methodology)}, 76(1):217--242, 2014.

\end{thebibliography}
\end{document}